   \pgfplotsset{compat=newest}
\newtheorem{theorem}{Theorem}
\newtheorem{proposition}{Proposition}
\newtheorem{definition}{Definition}
\newtheorem{lemma}{Lemma}
\newcounter{experiment}[section]
\newenvironment{experiment}[1][]{\refstepcounter{experiment}\par
   \noindent \textbf{Experiment \theexperiment: #1} \rmfamily}{\par}
\newcommand{\norm}[1]{\left\lVert#1\right\rVert}
\DeclareMathOperator{\vspan}{span}
\begin{document}
\title{Dynamic Mode Decomposition of Control-Affine Nonlinear Systems using Discrete Control Liouville Operators}
\author{Zachary Morrison, Moad Abudia, Joel A. Rosenfeld, and Rushikesh Kamalapurkar
\thanks{This research was supported by the Air Force Office of Scientific Research (AFOSR) under contract number FA9550-20-1-0127 and the National Science Foundation (NSF) under awards number 2027999. Any opinions, findings and conclusions or recommendations expressed in this material are those of the author(s) and do not necessarily reflect the views of the sponsoring agencies.}
\thanks{Z. Morrison, Moad Abudia, and R. Kamalapurkar are with the School of Mechanical and Aerospace Engineering, Oklahoma State University, Stillwater, OK, 74074, United States of America (e-mail: {zachmor,abudia,rushikesh.kamalapurkar}@okstate.edu). J. A. Rosenfeld is with the Department of Mathematics and Statistics, University of South Florida, Tampa, FL, 33620, United States of America (e-mail:rosenfeldj@usf.edu).}\vspace{-1.5em}}

\maketitle
\thispagestyle{empty}
\begin{abstract}
The representation of a nonlinear dynamical system as infinite-dimensional linear operator over a Hilbert space of functions enables the study of the nonlinear system via pseudo-spectral analysis of the corresponding operator. In this paper, we develop a novel operator representation of discrete-time, control-affine nonlinear dynamical systems. We also demonstrate that this representation can be used to predict the behavior of the closed-loop system in response to a given feedback law. The representation is learned using recorded snapshots of the system state resulting from arbitrary, potentially open-loop control inputs. We thereby extend the predictive capabilities of dynamic mode decomposition to discrete-time nonlinear systems that are affine in control. We validate the method using two numerical experiments by predicting the response of a controlled Duffing oscillator to a known feedback law. The advantages of the developed method relative to existing techniques in the literature are also demonstrated.
\end{abstract}

\begin{IEEEkeywords}
Dynamic mode decomposition with control, composition operators, reproducing kernel Hilbert spaces, nonlinear system ID
\end{IEEEkeywords}
\section{Introduction}
\IEEEPARstart{I}{n} this paper, a novel representation of discrete-time control-affine nonlinear systems as infinite-dimensional linear operators over reproducing kernel Hilbert spaces (RKHSs) is introduced. This effort is inspired by the method first developed in \cite{SCC.Rosenfeld.Kamalapurkar2021}, which introduced similar operator representations for \textit{continuous-time} dynamical systems. The idea of representing a nonlinear system as an infinite-dimensional linear operator in Hilbert space was first put forth by B.O. Koopman in \cite{SCC.Koopman1931} and the resulting composition operator is aptly known as the Koopman operator. This higher-dimensional space is typically referred to as the \textit{feature space} or \textit{lifted space} and the Koopman operator acts as a composition operator on the lifted space. In recent years, dynamic mode decomposition (DMD) and other data-driven methods have seen a resurgence due to the abundance of data and increased the availability of computational power \cite{SCC.Kutz.Brunton.ea2016}. An example of the application of DMD can be seen in the fluid mechanics community, where modal decomposition of fluid flows is accomplished \cite{SCC.Schmid2010}, \cite{SCC.Mezic2013}. In a more general sense, DMD is intimately connected to the Koopman operator as DMD is one method used to approximate the Koopman operator associated with the dynamical system \cite{SCC.Mezic2013}, \cite{SCC.Tu.Rowley.ea2014a}. 

The Koopman approach is amenable to spectral methods in linear operator theory in certain cases, e.g. see \cite{SCC.Gonzalez.Abudia.ea2021}, but spectral convergence cannot be guaranteed in general; therefore, Koopman DMD methods are pseudo-spectral numerical methods. Despite this theoretical limitation, Koopman DMD and Liouville DMD methods in both continuous and discrete time have been shown to exhibit remarkable predictive accuracy over finite-time horizons \cite{SCC.Kutz.Brunton.ea2016}. Moreover, Koopman DMD allows one to study dynamical systems without direct knowledge of the dynamics, as Koopman DMD is strictly data driven and requires no knowledge of the dynamical system \cite{SCC.Kutz.Brunton.ea2016}. For measurements corrupted by noise or in the case of stochastic systems, robust approximations of the Koopman operator can be formulated  \cite{SCC.Sharma.Huang.ea2019}. The ultimate goal of DMD is to develop a data-driven model via an eigendecomposition of the Koopman operator, under the assumption that the full-state observable (the identity function) is in the span of the eigenfunctions \cite{SCC.Gonzalez.Abudia.ea2021}. The addition of control adds greater difficulty to data-driven methods like DMD, as the Koopman operator associated with the dynamical system depends upon the control input. Furthermore, in discrete time, the Koopman operator is generally not linear in its symbol, which makes separating the influence of the controller from the drift dynamics challenging. Despite the difficulty, there have been several successful methods for generalizing Koopman DMD for dynamical systems with control in results such as \cite{SCC.Goswami.Paley2022},\cite{SCC.Korda.Mezic2018a, SCC.Proctor.Brunton.ea2016, SCC.Williams.Hemati.ea2016, SCC.Proctor.Brunton.ea2018, SCC.Huang.Ma.ea2018}.

The method presented in \cite{SCC.Proctor.Brunton.ea2016} yields a DMD routine to represent a general nonlinear system with control as a control-affine linear system. This idea is generalized in \cite{SCC.Korda.Mezic2018a} with extended DMD (eDMD), providing greater predictive power. Furthermore, for a general discrete-time, nonlinear dynamical system with control, the authors in \cite{SCC.Korda.Mezic2018a} utilize the shift operator to describe the time evolution of the control signal. Also, in discrete-time, separation of the control input from the state can be achieved via first order approximations \cite{SCC.Straesser.Berberich.ea2023}. For continuous-time dynamical systems, the Koopman canonical transform (see \cite{SCC.Surana2016}) is used in \cite{SCC.Goswami.Paley2022} to leverage a formulation of the dynamical system in the lifted space as a control-affine, bilinear system, called the Koopman bilinear form (KBF). The KBF is then amenable to the design of feedback laws using techniques from optimal control.

The aforementioned methods demonstrate the ability to predict the response of both discrete-time \cite{SCC.Korda.Mezic2018a} and continuous-time \cite{SCC.Goswami.Paley2022} dynamical systems to open-loop inputs. The algorithm developed in this paper offers an advantage over the methods from \cite{SCC.Goswami.Paley2022, SCC.Korda.Mezic2018a, SCC.Proctor.Brunton.ea2016, SCC.Williams.Hemati.ea2016, SCC.Proctor.Brunton.ea2018, SCC.Huang.Ma.ea2018, SCC.Straesser.Berberich.ea2023}, since in addition to a predictive model, it also estimates eigenfunctions, and consequently, a Koopman invariant subspace of the \emph{closed-loop system}. A key contribution here is the extension of the method presented in \cite{SCC.Rosenfeld.Kamalapurkar2021} to the discrete-time case. The operator representation presented in \cite{SCC.Rosenfeld.Kamalapurkar2021} relies on linearity of differential and multiplication operators to separate the influence of the controlled and the uncontrolled part of the system dynamics. In discrete time, the differential operators need to be replaced by composition operators, and composition operators are typically not linear in their symbol. Herein lies the difficulty of extending continuous-time DMD results to discrete-time DMD results, as separation of the effect of the control input from the effect of the drift dynamics is nontrivial.

In this paper, we take an operator-theoretic approach to DMD with a novel operator definition that accounts for the effect of control and the discrete-time nature of the problem. The algorithm is referred to as discrete control Liouville DMD or DCLDMD for brevity. To accomplish DCLDMD, the discrete, nonlinear dynamical system is represented as a composition of two operators acting on a Hilbert space of functions. The first operator mimics the effects of a composition operator, which maps from a reproducing kernel Hilbert space (RKHS) to a vector-valued RKHS (vvRKHS). In order to account for the effect of control, we make use of a multiplication operator which maps functions in the vvRKHS back into the RKHS. In doing so, we obtain an approximate representation of the dynamical system as a composition of the aforementioned operators. 

The paper is organized into the following sections. Section \ref{sec:Background} establishes the mathematical background for dynamic mode decomposition with discrete control Liouville operators. Section \ref{sec:ProbS} contains the problem description. Section \ref{sec:DCLDMD} provides the derivation for discrete control Liouville dynamic mode decomposition, as well as outlining the DCLDMD algorithm. Section \ref{sec:NumExp} contains the numerical experiments involving the Duffing oscillator. Lastly, section \ref{sec:concl} concludes the paper.

\section{Background} \label{sec:Background}
In this section, we provide a brief overview of  RKHSs and vvRKHSs and their role in DCLDMD.

\begin{definition}\label{Def:RKHS}
An RKHS $\Tilde{H}$ over a set $X\subset \mathbb{R}^{n}$ is a Hilbert space of functions $f: X\to \mathbb{R}$ such that for all $x\in X$ the evaluation functional $E_{x}f \coloneqq f(x)$ is bounded. By the Riesz representation theorem, there exists a function $\Tilde{K}_{x} \in \Tilde{H}$ such that $f(x) ={\langle f, \Tilde{K}_{x} \rangle}_{\Tilde{H}}$ for all $f\in \Tilde{H}$. 
\end{definition}

The snapshots of a dynamical system are embedded into an RKHS via a kernel map $x \mapsto \Tilde{K}(\cdot,x) \coloneqq \Tilde{K}_{x}$. Moreover, the span of the set $\{\Tilde{K}_{x} : x \in X\}$ is dense in $\Tilde{H}$.
\begin{proposition}\label{Prop:Kdensity}
    If $A \coloneqq \{\Tilde{K}_{x} : x \in X\}$, then $\text{span }A = \Tilde{H}$.
\end{proposition}

\begin{proof}
    To show that the span of the set $\{\Tilde{K}_{x} : x \in X\}$ is dense in $\Tilde{H}$ amounts to showing that $(A^{\perp})^{\perp} = \Tilde{H}$. Let $h \in A^{\perp}$, then $\langle h, \Tilde{K}_{x}\rangle = h(x) = 0$. Hence $h\equiv 0$ on $X$. Thus $A^{\perp} = \{0\}$.
\end{proof}

In order to account for the effect of control, we make use of a vvRKHS.
\begin{definition}\label{Def:vvRKHS}
Let $\mathcal{Y}$ be a Hilbert space, and let $H$ be a Hilbert space of functions from a set $X$ to $\mathcal{Y}$. The Hilbert space $H$ is a \emph{vvRKHS} if for every $\bar{u} \in \mathcal{Y}$ and $x \in X$, the functional $f \mapsto \langle f(x), \bar{u} \rangle_{\mathcal{Y}}$ is bounded.
\end{definition}

To each $x \in X$ and $\bar{u} \in \mathcal{Y}$, we can associate a linear operator over a vvRKHS given by $(x,\bar{u}) \mapsto K_{x,\bar{u}}$, following \cite{SCC.Rosenfeld.Kamalapurkar2021}. The function $K_{x,\bar{u}}$ is known as the kernel operator and the span of these functions constitutes a dense set in the respective vvRKHS \cite[Proposition 1]{SCC.Rosenfeld.Kamalapurkar2021}. Given a function $f \in H$, the reproducing property of $K_{x,\bar{u}}$ implies ${\langle f, K_{x,\bar{u}} \rangle}_{H} = {\langle f(x), \bar{u} \rangle}_{\mathcal{Y}}$.  For more discussion on vvRKHSs see \cite{SCC.Carmeli.DeVito.ea2010}.

\section{Problem Statement} \label{sec:ProbS}
Consider a control-affine, discrete-time dynamical system of the form
\begin{equation} \label{eqn:DTDS}
x_{k+1} = F(x_{k}) + G(x_{k})u_{k},
\end{equation}
where $x \in \mathbb{R}^{n}$ is the state, $u\in\mathbb{R}^m$ is the control input, $F : \mathbb{R}^{n} \to \mathbb{R}^{n}$ and $G: \mathbb{R}^{n} \to \mathbb{R}^{n\times m}$ are functions corresponding to the drift dynamics and the control effectiveness, respectively. We refer to the individual functions which comprise the columns of $G$ by $G_{j}:\mathbb{R}^{n} \to \mathbb{R}^{n}$, for $1\leq j\leq m$. Given a feedback law $\mu :\mathbb{R}^{n} \to \mathbb{R}^{m}$ and a set of data points $\{(x_{k},x_{k+1},u_{k})\}_{k=1}^{n}$, where $u_k$ are arbitrary (potentially open-loop) control inputs, the goal is to predict the response of the system in (\ref{eqn:DTDS}) to the feedback law $\mu$.

In this paper, the set $X$ is selected to be a compact subset of $\mathbb{R}^n$, the set $\mathcal{Y}$ is selected to be $\mathbb{R}^{1\times(m+1)}$, $\Tilde{H}$ denotes an RKHS of continuous functions from $X$ to $\mathbb{R}$, and $H$ denotes a vvRKHS of continuous functions from $X$ to $\mathbb{R}^{1\times (m+1)}$.

\section{Operator representation of controlled discrete-time systems} \label{sec:DCLDMD}
A linear operator can be associated with the dynamical system in (\ref{eqn:DTDS}), as a composition of two operators: a composition-like discrete Liouville operator and a multiplication operator. This operator representation is derived in this section. 
\subsection{A Composition-like Kernel Propagation Operator}
The technical lemma below and the proposition that follows are needed for the kernel propagation operator to be well-defined.
\begin{lemma}\label{lem:perpzero}
    The set $\Omega \subset H$, defined as $\Omega  \coloneqq \{K_{x,\bar{u}} : x \in X, u\in\mathbb{R}^m, \text{ and } \bar{u}\coloneqq\begin{pmatrix}
        1 & u^{\top}
    \end{pmatrix} \in \mathcal{Y}\}$, satisfies $\Omega^\perp = \{0\}$.
\end{lemma}
\begin{proof}
    Let $h\in \Omega^\perp$. The reproducing property of $K_{x,\bar{u}}$ implies that for all $u \in \mathbb{R}^m$ and $x\in X$, $\left\langle h(x),\begin{pmatrix}
        1&u^{\top}
    \end{pmatrix}\right\rangle_{\mathcal{Y}} = \left\langle h,K_{x,\bar{u}}\right\rangle_{H}$. Since $h\in \Omega^\perp$ and $K_{x,\bar{u}} \in \Omega$, we conclude that $\left\langle h,K_{x,\bar{u}}\right\rangle_{H} = 0$. As a result, for each fixed $x \in X$ and for all $u\in\mathbb{R}^m$, we have $\left\langle h(x),\begin{pmatrix}
        1&u^{\top}
    \end{pmatrix}\right\rangle_{\mathcal{Y}} = 0$. Since the only such $ h(x) \in \mathcal{Y} $ is the zero vector, we conclude that $h = 0$.
\end{proof}
\begin{proposition}
Let $L_z \in H$ be a function such that for all tuples $(x,u,y)$ satisfying $y=F(x)+G(x)u$, we have
\begin{equation}
\left\langle [L_{z}](x), \begin{pmatrix}1&u^{\top}\end{pmatrix} \right\rangle_{\mathcal{Y}} = \Tilde{K}_{z}(y). \label{eq:operatorDef}
\end{equation} For all $z\in X$, the map $\Tilde{K}_z \mapsto L_z$ 
is a well-defined operator.
\end{proposition}

\begin{proof}
    For a given $z\in X$, suppose there are two functions, $L_z^1$ and $L_z^2$, each of which satisfy \eqref{eq:operatorDef} given above. Then, for any tuple $(x, y, u)$ which satisfies $y = F(x) + G(x)u$,
\begin{equation*}
    {\left\langle [L_z^1](x), \begin{pmatrix}1&u^{\top}\end{pmatrix} \right\rangle}_{\mathcal{Y}} = {\left\langle [L_z^2](x), \begin{pmatrix}1&u^{\top}\end{pmatrix} \right\rangle}_{\mathcal{Y}},
\end{equation*}
and therefore,$ {\left\langle [L_z^1](x) - [L_z^2](x), \begin{pmatrix}1&u^{\top}\end{pmatrix} \right\rangle}_{\mathcal{Y}} = 0$.

Using the reproducing property,
\begin{equation}\label{eqn:uno}
    {\left\langle [L_z^1] - [L_z^2], K_{x,\bar{u}} \right\rangle}_{H} = 0
\end{equation} 
for all vectors in the set $\Omega \coloneqq \{K_{x,\bar{u}} : x \in X \text{ and } \bar{u} \in \mathcal{Y} \mid \bar{u} = \begin{pmatrix}1&u^{\top}\end{pmatrix}, u \in \mathbb{R}^{m} \}$. 

As a result, $[L_z^1] - [L_z^2] \in \Omega^{\perp}$, where $\perp$ denotes the orthogonal complement of $\Omega \subset \mathcal{Y}$. 

Since $\Omega^\perp = \{0\}$ according to lemma \ref{lem:perpzero}, we conclude that for all $z\in X$,
$[L_z^1] =  [L_z^2]$.
That is, the operator $\tilde{K}_z\mapsto L_z$ is well defined on the set $\{\tilde{K}_{z}\}_{z\in X}$. Linearity of the operator then implies that it is also well-defined on $\vspan\{\tilde{K}_{z}\}_{z\in X}$.
\end{proof}

\begin{definition}\label{Def:DiscreteLiouvilleOperator}
Let $A_{F,G}:\mathcal{D}\left(A_{F,G}\right)\to H$ be the operator with domain $\mathcal{D}\left(A_{F,G}\right)\coloneqq\vspan\{\tilde{K}_z\}_{z\in X}$ that maps, for each $z\in X$, the function $\Tilde{K}_{z}$ to a function $[A_{F,G}\Tilde{K}_{z}] \in H$ such that for all tuples $(x,u,y)$ satisfying $y=F(x)+G(x)u$, we have
\begin{equation}
{\left\langle \left[A_{F,G}\Tilde{K}_{z}\right](x), \begin{pmatrix}1&u^{\top}\end{pmatrix} \right\rangle}_{\mathcal{Y}} = \Tilde{K}_{z}(y).
\end{equation}
\end{definition} 

A few remarks regarding definition \ref{Def:DiscreteLiouvilleOperator} are in order. The kernel propagation operator $A_{F,G}$ is composition-like in the sense that if a linear kernel $\tilde{K}_{z}(x)=z^{\top}x$ is used, one could define $A_{F,G}$ explicitly as $A_{F,G}\tilde{K}_{z} = [\tilde{K}_{z}(F(\cdot)),\tilde{K}_{z}(G_1(\cdot)),\cdots,\tilde{K}_{z}(G_m(\cdot))]$. In that case, due to linearity of the kernel,
$\left\langle \left[A_{F,G}\tilde{K}_{z}\right](x), \begin{pmatrix}1&u^{\top}\end{pmatrix} \right\rangle_{\mathcal{Y}}
= \tilde{K}_{z}(F(x)) + \sum_{j=1}^m \tilde{K}_{z}(G_j(x))u_j =  \tilde{K}_{z}(y)$. That is, similar to the Koopman operator for autonomous systems, the operator $A_{F,G}$, when composed with the inner product operation in a RKHS with a linear kernel, propagates the observable $\tilde{K}_{z} $ one step forward in time.

In the case of nonlinear kernels, an explicit expression for the operator $A_{F,G}$ cannot be derived. However, the implicit definition above, which achieves one-step propagation of the kernels by definition, is still useful for DMD. 

Since $\vspan\{\Tilde{K}_x\}_{x\in X}$ is dense in $H$, the kernel propagation operator $A_{F,G}$ is densely defined. As such, the adjoint $A^*_{F,G}$ exists and can be defined through its domain.
\begin{definition}\label{Def:AdjointK_FG}
The domain of the adjoint $A_{F,G}^{*}$ of $A_{F,G}$ is defined as $\mathcal{D}(A_{F,G}^{*}) \coloneqq \{  f \in H : h \mapsto \langle A_{F,G}h, f \rangle_{H} \text{ is bounded on $\mathcal{D}(A_{F,G})$} \}.$
\end{definition} 

Note that for all $x\in X$ and $\bar{u}\in \mathcal{Y}$, the kernel functions $K_{x,
\bar{u}}$ of $H$ are in the domain of the adjoint $A^{*}_{F,G}$. Indeed, if $A_{F,G}h \in H$, $ \langle A_{F,G}h, K_{x,\bar{u}} \rangle$ is bounded by definition \ref{Def:vvRKHS} and hence $K_{x,\bar{u}} \in \mathcal{D}(A_{F,G}^{*})$.


\subsection{Multiplication Operators}
Let $\nu : X \to \mathcal{Y}$ be a continuous function. The multiplication operator with symbol $\nu$ is denoted as $M_{\nu} : \mathcal{D}(M_{\nu}) \to \Tilde{H}$. For a function $h \in \mathcal{D}(M_{\nu})$, we define the action of the multiplication operator on $h$ as
\begin{equation*}
[M_{\nu}h](\cdot) = {\langle h(\cdot), \nu(\cdot)\rangle}_{\mathcal{Y}},
\end{equation*}
where the domain of the multiplication operator is given as 
\begin{equation*}
\mathcal{D}(M_{\nu}) \coloneqq \{h\in H \mid x \mapsto {\langle h(x), \nu(x)\rangle}_{\mathcal{Y}} \in \Tilde{H}\}.
\end{equation*}

For completeness, we recall the interaction between multiplication operators and kernel operators from \cite{SCC.Rosenfeld.Kamalapurkar2021}. The interaction is  used to calculate the finite-rank representation of the composition of the multiplication operator with the  kernel propagation operator from Definition \ref{Def:DiscreteLiouvilleOperator}. 
\begin{proposition}\label{Prop2}
    Suppose that $\nu : X \to \mathcal{Y}$ corresponds to a densely defined multiplication operator $M_{\nu} : \mathcal{D}(M_{\nu}) \to \Tilde{H}$ and $\Tilde{K} : X\times X \to \mathbb{R}$ is the kernel function of the RKHS $\Tilde{H}$. Then, for all $x\in X$, $\Tilde{K}_{x} \in \mathcal{D}(M^{*}_{\nu})$, where $M^{*}_{\nu}$ is the adjoint of $M_{\nu}$, and $
        M^{*}_{\nu}\Tilde{K}_{x} = K_{x,\nu(x)}$.
\end{proposition}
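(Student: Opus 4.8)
The plan is to verify directly that the linear functional $h \mapsto {\langle M_{\nu}h, K(\cdot,x)\rangle}_{\tilde H}$, initially defined only on the dense subspace $\mathcal{D}(M_{\nu}) \subseteq H$, is bounded, and to identify its Riesz representative as $K_{x,\nu(x)}$; both membership of $K(\cdot,x)$ in $\mathcal{D}(M^{*}_{\nu})$ and the claimed formula then follow from the definition of the Hilbert-space adjoint of a densely defined operator.

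First I would fix $x \in X$ and take an arbitrary $h \in \mathcal{D}(M_{\nu})$, so that $M_{\nu}h \in \tilde H$. Applying the reproducing property of $\tilde H$ at the point $x$ gives
\[
{\langle M_{\nu}h, K(\cdot,x)\rangle}_{\tilde H} = (M_{\nu}h)(x) = {\langle h(x), \nu(x)\rangle}_{\mathcal{Y}},
\]
where the last equality is the definition of the action of $M_{\nu}$. Next I would invoke the reproducing property of the vvRKHS $H$ associated with the kernel operator, namely ${\langle h, K_{x,v}\rangle}_{H} = {\langle h(x), v\rangle}_{\mathcal{Y}}$ for every $v \in \mathcal{Y}$; specializing to $v = \nu(x)$ yields ${\langle h(x), \nu(x)\rangle}_{\mathcal{Y}} = {\langle h, K_{x,\nu(x)}\rangle}_{H}$. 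Chaining the two identities produces
\[
{\langle M_{\nu}h, K(\cdot,x)\rangle}_{\tilde H} = {\langle h, K_{x,\nu(x)}\rangle}_{H} \qquad \text{for all } h \in \mathcal{D}(M_{\nu}).
\]

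From this the conclusion is immediate: the right-hand side is a bounded linear functional of $h$ on all of $H$, with norm $\|K_{x,\nu(x)}\|_{H}$, so a fortiori its restriction to $\mathcal{D}(M_{\nu})$ is bounded — and that is exactly the condition for $K(\cdot,x)$ to belong to $\mathcal{D}(M^{*}_{\nu})$. Because $\mathcal{D}(M_{\nu})$ is dense in $H$, the representing element is unique, so the defining relation ${\langle M_{\nu}h, g\rangle}_{\tilde H} = {\langle h, M^{*}_{\nu}g\rangle}_{H}$ of the adjoint forces $M^{*}_{\nu}K(\cdot,x) = K_{x,\nu(x)}$.

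The only genuine subtlety, and the step I would treat most carefully, is the bookkeeping for densely defined (possibly unbounded) operators: the adjoint $M^{*}_{\nu}$ is well defined precisely because $\mathcal{D}(M_{\nu})$ is dense, and one must establish the domain condition — boundedness of $h \mapsto {\langle M_{\nu}h, K(\cdot,x)\rangle}_{\tilde H}$ — before writing down the formula, rather than manipulating $M^{*}_{\nu}$ as though it were everywhere defined. Once that is in place, the proof is a two-line composition of the reproducing property in $\tilde H$ with the reproducing property in $H$.
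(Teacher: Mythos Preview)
Your argument is correct and is exactly the standard two-line proof: apply the reproducing property in $\tilde H$, then the reproducing property in the vvRKHS $H$, and read off the adjoint from the resulting identity. Note that the paper does not actually supply its own proof of this proposition; it merely quotes the statement from \cite{SCC.Rosenfeld.Kamalapurkar2021}, so there is nothing to compare against beyond observing that your approach is the natural one and would be what that reference contains.
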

The composition of the  kernel propagation operator from Definition \ref{Def:DiscreteLiouvilleOperator} and the multiplication operator can be used to  define the discrete control Liouville operator.

\subsection{The Discrete Control Liouville Operator}

Taking the composition of $A_{F,G}$ and $M_{\nu}$, for a known feedback law $\mu : \mathbb{R}^{n}\to\mathbb{R}^{m}$, the evolution of an observable along trajectories of the dynamical system can be described in terms of an infinite-dimensional linear operator. 
\begin{definition}\label{Def:DCLOperator}
 Let $\nu \coloneqq \begin{pmatrix}1&\mu^{\top}\end{pmatrix} \in H$. The discrete control Liouville operator corresponding to the closed-loop system \begin{equation*}
    x_{k+1} = F(x_k) + G(x_k)\mu(x_k)
\end{equation*} is defined as the composition $M_{\nu}A_{F,G} : \mathcal{D}(M_{\nu}A_{F,G}) \to \tilde{H}$, where $\mathcal{D}(M_{\nu}A_{F,G}) = \mathcal{D}(A_{F,G})$.
\end{definition}
The discrete control Liouville operator governs the flow of observables in $\mathcal{D}(M_{\nu}A_{F,G})\subseteq\Tilde{H}$ along trajectories of the discrete-time dynamical system as $[M_{\nu}A_{F,G}h](x_{k}) = \left\langle [A_{F,G}h](x_{k}),\begin{pmatrix}1&\mu(x_{k})^{\top}\end{pmatrix} \right\rangle_{\mathcal{Y}} = h(x_{k+1})$. Furthermore, the composition $M_{\nu}A_{F,G}$ is a linear operator by linearity of the inner product and by definition \ref{Def:DiscreteLiouvilleOperator}.

\section{Discrete-time control Liouville DMD}
In order to represent the infinite-dimensional discrete control Liouville operator as a finite-dimensional operator, we select bases  $\alpha = \left\{\Tilde{K}_{x_{i}}\right\}_{i=1}^{n} \subset \Tilde{H}$ and $\beta = \left\{K_{x_{i},\bar{u}_{i}}\right\}_{i=1}^{n} \subset H$, where $\bar{u}_{i}\coloneqq \begin{pmatrix}
    1 & u_{i}^\top
\end{pmatrix} \in \mathcal{Y}$. DMD is then performed via an eigendecomposition of the finite-dimensional representation.

Given an observable $h\in \Tilde{H}$, let $\tilde{h} \coloneqq P_{\alpha}h = \sum_{i=1}^{n}\Tilde{a}_{i}\Tilde{K}_{x_{i}}$ be the projection of $h$ onto $\vspan\alpha$. One can recover a finite rank proxy of the discrete control Liovuille operator by observing its action restricted to $\vspan\alpha \subset \Tilde{H}$  and projecting the output $M_{\nu}A_{F,G}\Tilde{h}$ back onto $\vspan\alpha$. That is, recovering the finite-rank proxy amounts to writing $P_{\alpha}M_{\nu}A_{F,G}\Tilde{h}$ as $\sum_{i=1}^{n}\Tilde{b}_{i}\Tilde{K}_{x_{i}}$ and finding a matrix that relates the coefficients  $\{\Tilde{a}_{i}\}_{i=1}^{n}$ and $\{\Tilde{b}_{i}\}_{i=1}^{n}$. For brevity of notation, let $\Tilde{a} \coloneqq \begin{pmatrix}
    \Tilde{a}_{1} & \ldots & \Tilde{a}_{n}
\end{pmatrix}^\top$ and $\Tilde{b} \coloneqq \begin{pmatrix}
    \Tilde{b}_{1} & \ldots & \Tilde{b}_{n}
\end{pmatrix}^\top$. The coefficients can be computed by solving the linear system of equations (see \cite{SCC.Rosenfeld.Kamalapurkar2021} and \cite{SCC.Gonzalez.Abudia.ea2021})
\begin{equation}
\tilde{G} \begin{pmatrix}\tilde{b}_1\\ \vdots \\ \tilde{b}_{n} \end{pmatrix} = \begin{pmatrix}\langle M_{\nu}P_{\beta}A_{F,G}\tilde{h}, \tilde{K}_{x_1} \rangle_{\Tilde{H}} \\ 
\vdots \\ \langle M_{\nu}P_{\beta}A_{F,G}\tilde{h}, \tilde{K}_{x_{n}} \rangle_{\Tilde{H}}\end{pmatrix},
\end{equation}
where $\Tilde{G} = \{\Tilde{K}(x_{i},x_{j})\}_{i,j=1}^n$ is the kernel gram matrix for $\alpha$.
Since the kernel functions in $\alpha \subset \Tilde{H}$ are in the domain of the adjoint of the multiplication operator (see proposition \ref{Prop2}), for all $j$, $ \langle M_{\nu}P_{\beta}A_{F,G}\tilde{h}, \tilde{K}_{x_j}\rangle_{\Tilde{H}} = \langle A_{F,G}\tilde{h}, P_{\beta}M_{\nu}^{*}\tilde{K}_{x_j}\rangle_{H}$. Furthermore, by linearity of $A_{F,G}$,
\begin{align*}
&\langle A_{F,G}\tilde{h}, P_{\beta}M_{\nu}^{*}\tilde{K}_{x_{j}} \rangle_{H} = \sum_{i=1}^{n}\tilde{a}_i \langle A_{F,G}\tilde{K}_{x_i}, P_{\beta}M_{\nu}^{*}\tilde{K}_{x_j} \rangle_{H} \\ 
&\qquad\qquad= \sum_{i=1}^{n}\tilde{a}_i \langle A_{F,G}\tilde{K}_{x_i}, \sum_{k=1}^{n} w_{k,j}K_{x_k, \bar{u}_{k}} \rangle_{H},
\end{align*}
where $\{{w_{k,j}}\}_{k=1}^{n}$ are weights in the projection of $M_{\nu}^{*}\Tilde{K}_{x_{j}}$ onto $\vspan\beta$ and $w_{j} \coloneqq \begin{pmatrix}
    w_{1,j} & \ldots & w_{n,j}
\end{pmatrix}^\top$. Thus, 
\begin{align*}
&\langle A_{F,G}\tilde{h}, P_{\beta}M_{\nu}^{*}\tilde{K}_{x_{j}} \rangle_{H} = \sum_{i,k=1}^{n} \tilde{a}_i w_{k,j} \langle A_{F,G}\tilde{K}_{x_i}, K_{x_k,\bar{u}_{k}} \rangle_{H} \\
&\,\,= \sum_{i=1}^{n} \sum_{k=1}^{n}\tilde{a}_i w_{k,j} \langle [A_{F,G}\tilde{K}_{x_i}](x_k), \begin{pmatrix} 1&{u_{k}}^{\top} \end{pmatrix} \rangle_{\mathcal{Y}} = {\tilde{a}}^{\top} \tilde{I} w_{j}, 
\end{align*} where $\tilde{I} = \left( \left\langle [A_{F,G}\tilde{K}_{x_i}](x_k), {\begin{pmatrix} 1&{u_{k}}^{\top} \end{pmatrix}} \right\rangle_{\mathcal{Y}} \right)_{i,k=1}^{n}$  
is computed using the fact that $\left\langle [A_{F,G}\tilde{K}_{x_i}](x_k), \begin{pmatrix} 1&{u_{k}}^{\top} \end{pmatrix} \right\rangle_{\mathcal{Y}} = \tilde{K}_{x_i}(x_{k+1})$.

Since $M_{\nu}^{*}$ maps $\tilde{K}_{x_{j}}$ to $K_{x_{j}, \nu(x_j)}$, the coefficients $w_{j}$ in the projection of $K_{x_{j}, \nu(x_j)}$ onto $\text{span }\beta \subset H$ are solutions of
\begin{equation}\label{eqn6}
G \begin{pmatrix}w_{1,j}\\ \vdots \\ w_{n,j} \end{pmatrix} = \begin{pmatrix}\langle K_{x_j,\nu(x_j)}, K_{x_1,\bar{u}_{1}}\rangle_H \\ 
\vdots \\ \langle K_{x_{j},\nu(x_{j})}, K_{x_n,\bar{u}_{n}} \rangle_H\end{pmatrix},
\end{equation}
where $G = \left(\left\langle K_{x_i,\bar{u}_{i}}, K_{x_j,\bar{u}_{j}} \right\rangle_H \right)_{i,j=1}^{n}$ and $\nu(x_{j}) = \begin{pmatrix}
        1 & \mu(x_{j})^{\top}
\end{pmatrix} $. If a diagonal kernel operator $K_{x_i} \coloneqq \text{diag}\begin{pmatrix}
    \Tilde{K}_{x_1}&\ldots & \Tilde{K}_{x_{m+1}}
\end{pmatrix}$ is used, with $\Tilde{K}_{x_{j}} = \Tilde{K}_{x_{i}}$ for $1\leq j\leq m+1$, then the inner products in $G$ can be computed as \begin{multline}
    \langle K_{x_i,\bar{u}_{i}}, K_{x_j,\bar{u}_{j}} \rangle_{H}  = \langle K_{x_i,\bar{u}_{i}}(x_j), \begin{pmatrix}1&u_j^{\top}\end{pmatrix} \rangle_{\mathcal{Y}} = \\\begin{pmatrix}1&u_i^{\top}\end{pmatrix}\tilde{K}(x_j,x_i)\begin{pmatrix}1&u_{j}^{\top}\end{pmatrix}^{\top}.\label{eq:GCompute}
\end{multline}
Letting $I_{j}^{\top}$ denote the column vector on the right-hand side of (\ref{eqn6}), he $j$th row of $I$ is given by
\begin{equation*}
I_{j} = \left( {\langle K_{x_j,\nu(x_j)}, K_{x_1,\bar{u}_{1}}\rangle}_{H},\ldots,
{\langle K_{x_{j},\nu(x_{j})}, K_{x_n,\bar{u}_{n}} \rangle}_{H} \right).
\end{equation*} 

The complete finite-rank representation of the DCLDMD operator is then recovered as $[M_{\nu}P_\beta A_{F,G}]^{\alpha}_{\alpha} = \tilde{G}^{-1}IG^{-1}\tilde{I}^{\top}$, where the subscript $\alpha$ denotes the restriction of the operator to the $\vspan\alpha$, and the superscript $\alpha$ denotes projection of the output onto $\vspan\alpha$.  

\subsection{Discrete Control Liouville Dynamic Mode Decomposition}\label{subsec:DCLDMD}
DMD can be accomplished via an eigendecomposition of the finite-rank proxy of discrete control Liovuille operator. Let $\{v_{i},\lambda_{i}\}_{i=1}^{n}$ be the eigenvalue-eigenvector pairs of the matrix $[M_{\nu}P_\beta A_{F,G}]^{\alpha}_{\alpha}$. Following \cite{SCC.Gonzalez.Abudia.ea2021}, if $v_{j}$ is an eigenvector of the matrix $[M_{\nu}P_\beta A_{F,G}]^{\alpha}_{\alpha}$, then the function $\varphi_{j} = \sum_{i=1}^{n}(v_{j})_{i}\Tilde{K}_{x_{i}}$ is an eigenfunction of the operator $P_{\alpha}M_{\nu}P_{\beta}A_{F,G}\vert_{\alpha}$. 

If $\varphi_{j}$ is an eigenfunction of $P_{\alpha}M_{\nu}P_{\beta}A_{F,G}\vert_{\alpha}$ with eigenvalue $\lambda_{j}$, then
\begin{equation*}
\varphi_{j}(x_{k+1}) =  M_{\nu}A_{F,G}\varphi_{j}(x_{k}) = \lambda_{j} \varphi_{j}(x_{k}).
\end{equation*}
Hence, the eigenfunctions evolve linearly along the flow. The normalized eigenfunctions are defined as $\hat{\varphi}_{j} \coloneqq \frac{1}{\sqrt{v_{j}^{\top}\Tilde{G}v_{j}}}\sum_{i=1}^{n}(v_{j})_{i}\Tilde{K}_{x_{i}}$.

Assuming the the $j$-th component identity function, $g_{id}$, defined as $g_{\mathrm{id},j}(x)\coloneqq x_{j}$ is in $\mathcal{D}(M_{\nu}A_{F,G})\subset \Tilde{H}$, for each $j=1,2,\ldots, n$, we can describe the evolution of the full-state observable $g_{\mathrm{id}}(x)=x$ as a linear combination of eigenfunctions of $M_{\nu}A_{F,G}$.
This approach yields a data-driven model of the closed-loop dynamical system as a linear combination of eigenfunctions of the operator $P_{\alpha}M_{\nu}P_\beta A_{F,G}\vert_{\alpha}$.  
That is, for a given $x_{0} \in X$ we have a pointwise approximation of the flow of the closed-loop system
\begin{equation}
    x_{k+1} = F(x_{k})+G(x_{k})\mu(x_{k}) \approx \sum_{i=1}^{n} \lambda_{i}^{k} \xi_{i} \hat{\varphi}_{i}(x_{0}).
\end{equation}
We refer to the vectors $\xi_{i}$ as the \textit{Liouville Modes}, these are the coefficients required to represent the full-state observable as a linear combination of the eigenfunctions. We can calculate the modes by solving $g_{\mathrm{id}}(x)=x=\sum_{i=1}^{n}\xi_{i}\varphi_{i}$ for $\xi_i$, which yields $\xi \coloneqq \begin{pmatrix}\xi_1& \cdots& \xi_n\end{pmatrix} = X(V^{\top}\Tilde{G})^{-1}$, where $V$ is the matrix of normalized eigenvectors of the finite-rank representation $[M_{\nu}P_\beta A_{F,G}]^{\alpha}_{\alpha}$ and $X\coloneqq \begin{pmatrix}x_{1}&\ldots&x_{n}\end{pmatrix}$ is the data matrix. We refer to this method as the \textit{direct reconstruction} of the flow.

We can also formulate an \textit{indirect reconstruction} of the flow by considering the function $F_{\mu} \coloneqq x\mapsto \sum_{i=1}^{n} \lambda_{i} \xi_{i} \hat{\varphi_{i}}(x)$ that approximates the closed loop dynamics under the feedback law $\mu$ as $x_{k+1} \approx F_{\mu}(x_{k})$. The indirect method generally performs better for approximating the nonlinear dynamics; we hypothesize that the better performance is due to the fact we are estimating nonlinear dynamics using nonlinear functions, as the indirect reconstruction yields a nonlinear model of the flow, as opposed to the direct reconstruction which is linear. Due to its superior performance, we will use the indirect reconstruction in the numerical experiments in  section \ref{sec:NumExp}. The DCLDMD algorithm is summarized in Algorithm 1. 

\section{Convergence Properties of DCLDMD}\label{sec:Convergence}

Discrete control Liouville DMD enjoys convergence guarantees on par with current state-of-the-art Koopman methods. That is, the sequence of finite-rank operators  $P^{n}_{\alpha}M_{\nu}P^{n}_\beta A_{F,G}P^{n}_{\alpha}$, where $P^{n}$ denotes the projection onto the $n$-dimensional span of $\alpha$ and $\beta$, respectively, converges to the operator $M_{\nu}A_{F,G}$ in the \textit{strong operator topology} (SOT). Underlying this fact is the assumption that as $n\to \infty$, the Gram matrices $\Tilde{G}$ and $G$ do not become rank deficient. 
\begin{theorem}
    If $A_{F,G}: \Tilde{H} \to H$ and $M_{\nu}:\mathcal{D}(M_{\nu})\to \Tilde{H}$ are bounded, and $\alpha \coloneqq \{\Tilde{K}_{x_{n}}\}_{n=1}^{\infty} \subset \Tilde{H}$ and $\beta \coloneqq \{{K}_{x_{n},\bar{u}_{n}}\}_{n=1}^{\infty} \subset H$ are two orthonormal sequences in $\Tilde{H}$ and $H$, respectively, then for all $f\in \Tilde{H}$,  $\lim_{n\to\infty}\norm{P^{n}_{\alpha}M_{\nu}P^{n}_\beta A_{F,G}P_{\alpha}^{n}f - M_{\nu}A_{F,G}f}_{\Tilde{H}} = 0$.
\end{theorem}
\begin{proof}
     Suppose $f\in \Tilde{H}$, then
    \begin{multline*}
     \norm{P^{n}_{\alpha}M_{\nu}P^{n}_\beta A_{F,G}P_{\alpha}^{n}f - M_{\nu}A_{F,G}f}_{\Tilde{H}} = \\
     \norm{ (P^{n}_{\alpha}-I)M_{\nu}P^{n}_\beta A_{F,G}P_{\alpha}^{n}f +   M_{\nu}(P^{n}_\beta A_{F,G}P^{n}f - A_{F,G}f)}_{\Tilde{H}} \\
     \leq \norm{(P^{n}_{\alpha}-I)(M_{\nu}P^{n}_\beta A_{F,G}P_{\alpha}^{n}f - M_{\nu}A_{F,G}f)}_{\Tilde{H}} +\\ \norm{(P^{n}_{\alpha}-I)M_{\nu}A_{F,G}f}_{\Tilde{H}}  + \norm{M_{\nu}(P^{n}_\beta A_{F,G}P_{\alpha}^{n}f - A_{F,G}f)}_{\Tilde{H}} \\
     \leq \norm{(P^{n}_{\alpha}-I)}_{op}\norm{(M_{\nu}P^{n}_\beta A_{F,G}P_{\alpha}^{n}f - M_{\nu}A_{F,G}f)}_{\Tilde{H}} +\\ \norm{(P^{n}_{\alpha}-I)M_{\nu}A_{F,G}f}_{\Tilde{H}}  + \norm{M_{\nu}(P^{n}_\beta A_{F,G}P_{\alpha}^{n}f - A_{F,G}f)}_{\Tilde{H}},
    \end{multline*}
    where $\norm{\cdot}_{op}$ denotes the operator norm. Since $M_{\nu}$ is continuous and $\norm{(P^{n}_{\alpha}-I)}_{op}$ is bounded (by Parseval's identity, see \cite[Section 3.1.11]{SCC.Pedersen1989}), and since $P^{n}_\beta A_{F,G}P_{\alpha}^{n}$ converges to $A_{F,G}$ in the SOT \cite[Page 172]{SCC.Pedersen1989}), the first and the third terms in the inequality above converge to 0 as $n\to\infty$. The fact that $P^{n}_{\alpha}$ converges to $I$ in the SOT implies the convergence of the second term to zero. Therefore, the sequence of operators $P^{n}_{\alpha}M_{\nu}P^{n}_{\beta} A_{F,G}P_{\alpha}^{n}$ converges to $M_{\nu}A_{F,G}$ in the SOT.
\end{proof} 
Convergence in the SOT does not guarantee convergence of the spectrum, but by theorem 4 in \cite{SCC.Korda.Mezic2018}, it does guarantee that there is a subsequence of eigenvalue-eigenfunction pairs of the finite-rank representation which converges to an eigenvalue-eigenfunction pair of the true discrete control Liouville operator.

\section{Numerical Experiments}\label{sec:NumExp}
As a demonstration of the efficacy of the developed DCLDMD algorithm, we apply the method to the controlled Duffing oscillator and compare it with the linear predictor developed in \cite{SCC.Korda.Mezic2018a}.
\begin{experiment}\label{exp:Duffing}
The controlled Duffing oscillator is a nonlinear dynamical system with state-space form 
\begin{multline}\label{eqn: Duffing}
     \begin{pmatrix}
         \dot{x}_{1} \\
         \dot{x}_{2}
     \end{pmatrix} = \begin{pmatrix}
         x_{2} \\
         -\delta x_{2} -\beta x_{1} -\alpha x_{1}^{3}
     \end{pmatrix} +
     \begin{pmatrix}
         0 \\
         2+\sin(x_{1})
     \end{pmatrix}u
\end{multline}
where $\alpha,\beta,\delta$ are coefficients in $\mathbb{R}$, $[x_{1},x_{2}]^{\top}\in\mathbb{R}^{2}$ is the state, and $u \in \mathbb{R}$ is the control input. For the experiments the parameters are selected to be: $\delta =0$, $\alpha =1$, and $\beta =-1$.

We descretize (\ref{eqn: Duffing}) using a time step of $0.01$ seconds to yield a discrete-time, control-affine dynamical system of the form $x_{k+1} = F(x_{k}) + G(x_{k})u_{k}$. Using the tuples $\{(x_{k}, x_{k+1}, u_{k})\}_{k=1}^{n}$ generated by the dynamical system, we aim to predict the response of the system  starting from the initial condition $x_{0}=[2,-2]^{\top}$ to two different feedback laws, $\mu(x_{k})=-2x_{k,1}-x_{k,2}$ and $\bar{\mu}(x_{k})=-2x^{3}_{k,1}-x_{k,2}$ for a total of $5$ seconds.

In the implementation of DCLDMD for the linear feedback law, $\mu$, we generate 225 data points $\{(x_{k}, x_{k+1}, u_{k})\}_{k=1}^{225}$ with initial conditions sampled from a $15\times 15$ grid within the set $[-3,3]\times [-3,3]\subset \mathbb{R}^{2}$. The control inputs are sampled uniformly from the interval $[-2,2]\subset \mathbb{R}$. For the case of the nonlinear feedback law, $\Bar{\mu}$, we generate 1225 data points from initial conditions sampled from a $35\times 35$ grid within the set $[-5,5]\times [-5,5]\subset \mathbb{R}^{2}$ and the control input are sampled uniformly from the interval $[-8,8]\subset \mathbb{R}$.

\begin{algorithm}[t]
    \caption{\label{alg:DCLDMD}The DCLDMD algorithm}
    \begin{algorithmic}[1]
        \renewcommand{\algorithmicrequire}{\textbf{Input:}}
        \renewcommand{\algorithmicensure}{\textbf{Output:}}
        \REQUIRE Data points $\{(x_{k},y_{k}, u_{k})\}_{k=1}^{n}$ that satisfy $y_{k}=F(x_{k})+G(x_{k})u_{k}$, reproducing kernels $\tilde{K}_{x_{j}}$ and $K_{x_{j},\bar{u}_{j}}$ for $\Tilde{H}$ and $H$, respectively. A feedback law $\mu$, kernel parameter $\sigma$, and a regularization parameter $\epsilon$.
        \ENSURE $\{\hat{\varphi}_{j},\lambda_j, \xi_{j}\}_{j=1}^{n}$
        \STATE $\Tilde{G} \leftarrow \{\Tilde{K}(x_{i},x_{j})\}_{i,j=1}^n$ 
        \STATE $\Tilde{I} \leftarrow \{\Tilde{K}(x_{k+1},x_{i})\}_{k,i=1}^{n}$ 
        \STATE $G \leftarrow \{\langle K_{x_i,\bar{u}_i}, K_{x_j,\bar{u}_{j}} \rangle_H \}_{i,j=1}^{n}$ (see \eqref{eq:GCompute})
        \STATE $I \leftarrow \{ \langle K_{x_j,\nu(x_j)}, K_{x_i,\bar{u}_{i}}\rangle_H\}_{i,j=1}^{n}$ (see \eqref{eq:GCompute})
        \STATE Compute $[M_{\nu}P_{\beta}A_{F,G}]_{\alpha}^{\alpha} = \tilde{G}^{-1}IG^{-1}\tilde{I}^{\top}$
        \STATE Eigendecomposition: $\{\varphi_j,\lambda_j\}_{j=1}^{n} \leftarrow [M_{\nu}P_\beta A_{F,G}]^{\alpha}_{\alpha}$
        \STATE Normalize the eigenfunctions: $\{\hat{\varphi}_{j}\}_{j=1}^{n} \leftarrow \hat{\varphi}_{j} = \frac{1}{\sqrt{v_{j}^{\top}\Tilde{G}v_{j}}}\sum_{i=1}^{n}(v_{j})_{i}\Tilde{K}_{x_{i}}$
        \STATE Liouville modes: $\xi \leftarrow X(V^{\top}\Tilde{G})^{-1}$
        \RETURN $\{\hat{\varphi}_{j}, \lambda_j, \xi_{j}\}_{j=1}^{n}$
    \end{algorithmic} 
\end{algorithm}

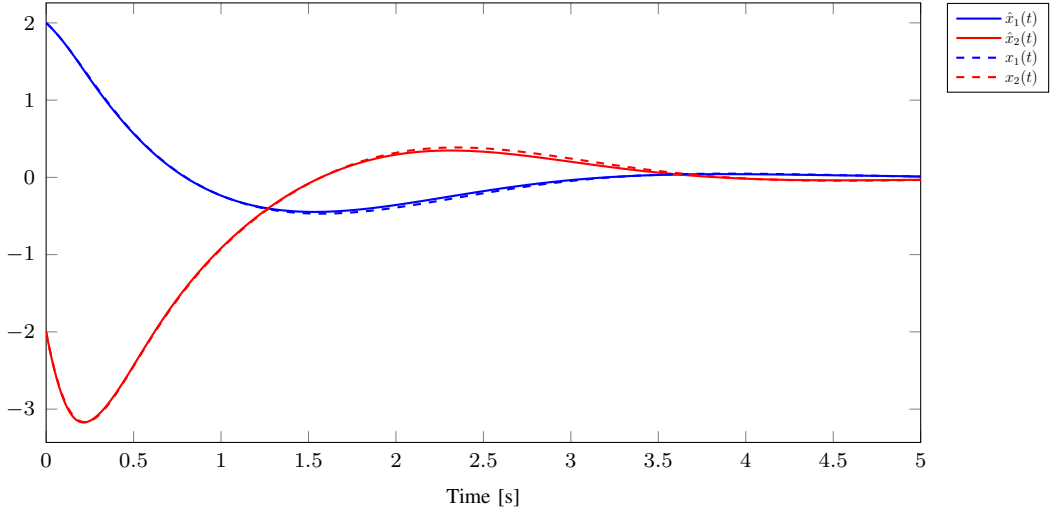
\begin{figure}
   \centering
    \begin{tikzpicture}
    \begin{axis}[
        xlabel={Time [s]},
        ylabel={},
        legend pos = outer north east,
        legend style={nodes={scale=0.5, transform shape}},
        enlarge y limits=0.05,
        enlarge x limits=0,
        height = 0.45\columnwidth,
        width = 0.8\columnwidth,
        label style={font=\scriptsize},
        tick label style={font=\scriptsize}]
        \addplot [thick, blue] table [x index=0, y index=1]{data/DuffingDiscreteReconComp.dat};
        \addplot [thick, red] table [x index=0, y index=2]{data/DuffingDiscreteReconComp.dat};
        \addplot [thick, blue, dashed] table [x index=0, y index=3]{data/DuffingDiscreteReconComp.dat};
        \addplot [thick, red, dashed] table [x index=0, y index=4]{data/DuffingDiscreteReconComp.dat};
         \legend{$\hat{x}_{1}(t)$,$\hat{x}_{2}(t)$, $x_{1}(t)$,$x_{2}(t)$}
    \end{axis}
\end{tikzpicture}
    \caption{A comparison of indirectly reconstructed trajectories $\hat{x}_{1}(t)$ and $\hat{x}_{2}(t)$ with the true trajectories $x_{1}(t)$ and $x_{2}(t)$ of the Duffing oscillator resulting from the linear feedback law $\mu$ in experiment \ref{exp:Duffing}.}
    \label{fig:DuffingDiscreteReconComp}
\end{figure}

In both implementations of DCLDMD, the Gaussian  radial basis function kernel $\Tilde{K}(x,y) = \mathrm{e}^{\frac{-{\left\Vert x-y\right\Vert}_{2}^2}{\sigma}}$ is used for calculation of the Gram matrices associated with $\alpha \subset \Tilde{H}$. The kernel width is set to $\sigma=10$ and $\sigma=20$ for the response of the system to $\mu$ and $\Bar{\mu}$, respectively. For $\beta \subset H$, we associate to each pair $\{(x_{k},u_{k})\}_{k=1}^{n}$ a kernel $K_{x_{k},\bar{u}_k} \coloneqq \begin{pmatrix}
    1&u_k^\top
\end{pmatrix}K_{x_{k}} \in H$. Here we use the kernel operator $K_{x_i} \coloneqq \text{diag}\begin{pmatrix}
\Tilde{K}_{x_1} & \cdots & \Tilde{K}_{x_{m+1}}
\end{pmatrix}$ where $\Tilde{K}_{x_j}(y) = \mathrm{e}^{\frac{-{\left\Vert x_{j}-y\right\Vert}_{2}^2}{\sigma}}$ for $j=1,\ldots,m+1$.  
Lastly, we select $\varepsilon = 10^{-6}$ for regularization of the Gram matrices in order to ensure invertibility of both $\Tilde{G}$ and $G$ in the finite-rank representation (see Algorithm \ref{alg:DCLDMD}).

A comparison between the true trajectories and the indirectly reconstructed trajectories corresponding to the feedback laws $\mu$ and $\Bar{\mu}$ can be seen in figures \ref{fig:DuffingDiscreteReconComp} and \ref{fig:DuffingDiscreteNLReconComp}, respectively.


\begin{figure}
  \centering
   \begin{tikzpicture}
    \begin{axis}[
        xlabel={Time [s]},
        ylabel={},
        legend pos = outer north east,
        legend style={nodes={scale=0.5, transform shape}},
        enlarge y limits=0.05,
        enlarge x limits=0,
        height = 0.45\columnwidth,
        width = 0.8\columnwidth,
        label style={font=\scriptsize},
        tick label style={font=\scriptsize}
    ]
        \addplot [thick, blue] table [x index=0, y index=1]{data/NLDuffingDiscreteReconComp.dat};
        \addplot [thick, red] table [x index=0, y index=2]{data/NLDuffingDiscreteReconComp.dat};
        \addplot [thick, blue, dashed] table [x index=0, y index=3]{data/NLDuffingDiscreteReconComp.dat};
        \addplot [thick, red, dashed] table [x index=0, y index=4]{data/NLDuffingDiscreteReconComp.dat};
         \legend{$\hat{x}_{1}(t)$,$\hat{x}_{2}(t)$, $x_{1}(t)$,$x_{2}(t)$}
    \end{axis}
\end{tikzpicture}
    \caption{A comparison of indirectly reconstructed trajectories $\hat{x}_{1}(t)$ and $\hat{x}_{2}(t)$ with the true trajectories $x_{1}(t)$ and $x_{2}(t)$ of the Duffing oscillator resulting from the nonlinear feedback law $\Bar{\mu}$ in experiment \ref{exp:Duffing}.}
    \label{fig:DuffingDiscreteNLReconComp}
\end{figure}
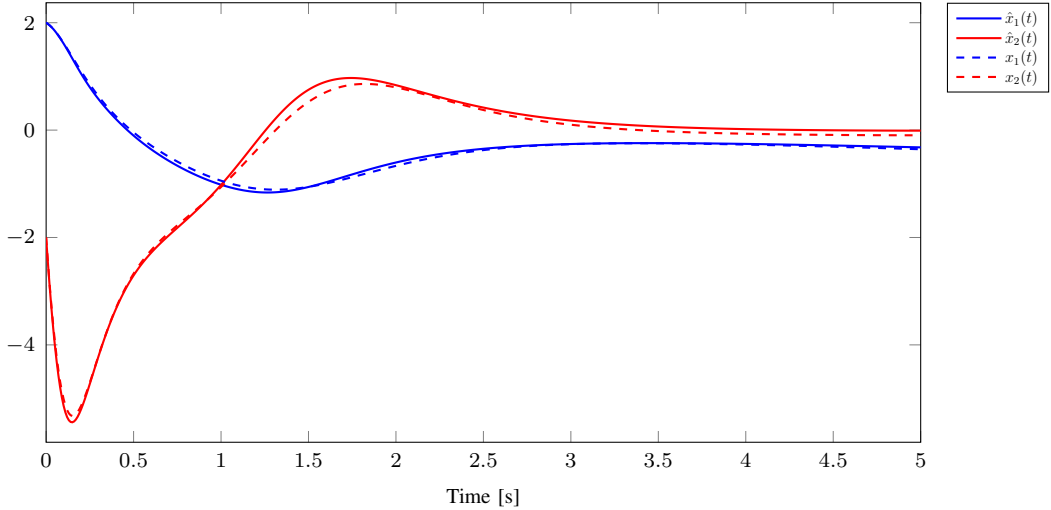
\end{experiment}

\begin{experiment}\label{exp:PredictionComp}
In this experiment, we compare the predictive capabilities of the indirect reconstruction via DCLDMD with the linear predictor derived in \cite{SCC.Korda.Mezic2018a}. The linear predictor in \cite{SCC.Korda.Mezic2018a} is of the form $z_{k+1} = Az_{k} + Bu_{k}$ with $x_{k} = Cz_{k}$ and $z$ being the lifted state (see \cite{SCC.Korda.Mezic2018a} for more details). For a given feedback law $\mu$, we can estimate the response of the Duffing oscillator described by equation (\ref{eqn: Duffing}) to the feedback law $\mu$ by using the linear predictor $z_{k+1} = Az_{k} + B\mu(Cz_{k})$.

For this experiment, we generate 1000 data points and DCLDMD is implemented using the same kernels as in experiment \ref{exp:Duffing}, except the kernel widths are both set to $\sigma = 100$. For regularization we set $\varepsilon = 10^{-6}$. For the linear predictor, extended DMD (eDMD) is performed with the Gaussian radial basis functions as in \cite{SCC.Korda.Mezic2018a}. For the initial condition $x_{0} = [2, -2]^{\top}$ and the feedback law $\mu(x_{k})=-2x_{k,1}-2x_{k,2}$, we compare the predictions of the indirect DCLDMD method and the linear predictor with the true trajectories (see figure \ref{fig:PredictorComp}).
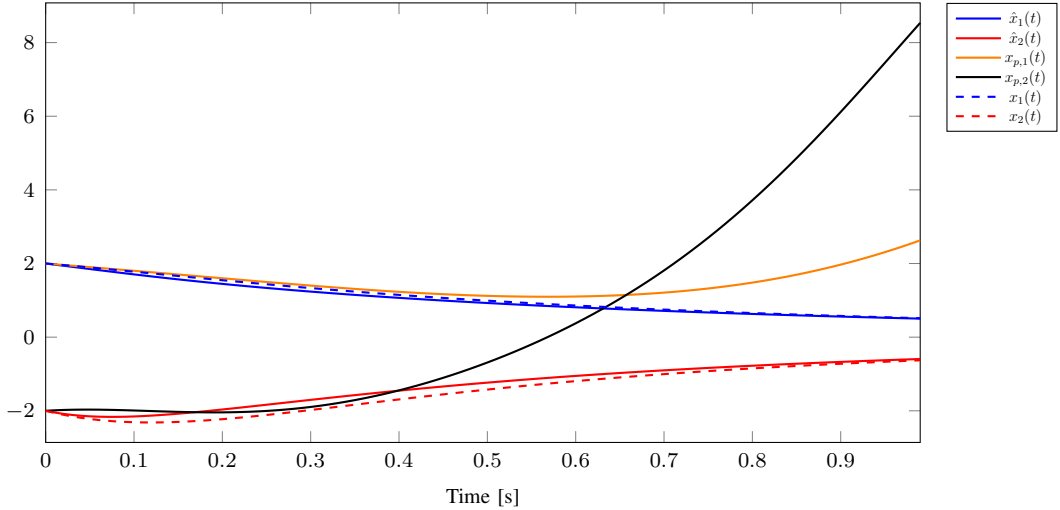
\begin{figure}
   \centering
             \begin{tikzpicture}
    \begin{axis}[
        xlabel={Time [s]},
        ylabel={},
        legend pos = outer north east,
        legend style={nodes={scale=0.5, transform shape}},
        enlarge y limits=0.05,
        enlarge x limits=0,
        height = 0.45\columnwidth,
        width = 0.8\columnwidth,
        label style={font=\scriptsize},
        tick label style={font=\scriptsize}
    ]
        \addplot [thick, blue] table [x index=0, y index=1]{data/LinvsDCLDMD_DiscreteReconCompare.dat};
        \addplot [thick, red] table [x index=0, y index=2]{data/LinvsDCLDMD_DiscreteReconCompare.dat};
        
        \addplot [thick, orange] table [x index=0, y index=3]{data/LinvsDCLDMD_DiscreteReconCompare.dat};
        \addplot [thick, black] table [x index=0, y index=4]{data/LinvsDCLDMD_DiscreteReconCompare.dat};
        
        \addplot [thick, blue, dashed] table [x index=0, y index=5]{data/LinvsDCLDMD_DiscreteReconCompare.dat};
        \addplot [thick, red, dashed] table [x index=0, y index=6]{data/LinvsDCLDMD_DiscreteReconCompare.dat};
         \legend{$\hat{x}_{1}(t)$, $\hat{x}_{2}(t)$, $x_{p,1}(t)$, $x_{p,2}(t)$, $x_{1}(t)$, $x_{2}(t)$}
    \end{axis}
\end{tikzpicture}
    \caption{A comparison between the linear predictor developed in \cite{SCC.Korda.Mezic2018a} and the indirect reconstruction via DCLDMD in experiment \ref{exp:PredictionComp}. Here, $\hat{x}_{i}(t)$, $x_{p,i}(t)$, and $x_{i}(t)$ represent the indirect reconstruction, the linear predictor, and the actual trajectories, respectively, where $i$ is a subscript denoting an element of the state.}
    \label{fig:PredictorComp}
\end{figure}
\end{experiment}

\subsection{Discussion}
The experiments demonstrate the efficacy of DCLDMD in an academic setting with the Duffing oscillator. The experiments are done with no prior model knowledge, besides the system being affine in control. The novelty of the representation can be seen in the separation of the control input and the state on the operator-theoretic level, while still preserving the nonlinearity of the dynamical system. This is opposed to the standard approach for discrete-time dynamical systems where the lifted state $z_{k} \in \mathbb{R}^{N}$ can be approximated as $z_{k+1}\approx Az_{k}+Bu_{k}$, with $A \in \mathbb{R}^{N\times N}$ and $B\in \mathbb{R}^{1\times N}$ found using extended DMD. Unless the original nonlinear system admits an exact lifting, which is not generally the case, the trajectories of the linear lifted systems  are expected to diverge from the trajectories of the nonlinear system with increasing prediction horizons.


In experiment \ref{exp:PredictionComp}, specifically, in figure \ref{fig:PredictorComp}, we observe that as expected, the behavior of the linear predictor from \cite{SCC.Korda.Mezic2018a} diverges from the behavior of the nonlinear Duffing oscillator under the given feedback law, while the indirect reconstruction approach developed in this paper accurately tracks the actual trajectory of the Duffing oscillator. We postulate that the improved predictive capability can be attributed to the fact that the indirect predictive model is a \textit{nonlinear} predictor, as opposed to the model from \cite{SCC.Korda.Mezic2018a}, which is a linear predictor, albeit in a higher dimensional lifted state space.


In both experiments, indirect reconstruction is used to estimate the flow. The indirect reconstruction explicitly depends upon the eigenfunctions of $P_{\alpha}M_{\nu}P_{\beta}A_{F,G}\vert_{\alpha}$. Whether or not we can always represent the full-state observable (i.e. the flow) in terms of the eigenfunctions is not entirely clear, but this is a standard assumption in the DMD literature. With this assumption in mind, DCLDMD is termed a heuristic approach for estimation of the dynamics. Regardless, the numerical experiments in section \ref{sec:NumExp} demonstrate the capability of DCLDMD for prediction of the response of the control-affine system to given feedback laws.

\section{Conclusion} \label{sec:concl}
In this paper, a novel operator representation of a control-affine nonlinear system  is developed as a composition of a multiplication operator and a composition-like kernel propagation operator over an RKHS. The multiplication operator takes advantage of the affine nature of the system to capture the effect of control on the system behavior, while the kernel-propagation operator captures the effect of the system dynamics on the kernels of the underlying RKHS. The resulting DMD algorithm is entirely data driven and requires no model knowledge besides the dynamical system being affine in control. Furthermore, the DCLDMD formulation provides a novel way to separate the state from the control input on the operator-theoretic level. This separation leads to better prediction capabilities over existing methods, as evidenced by the results of Experiment 2. Moreover, since DCLDMD can be used to predict closed-loop trajectories of a nonlinear system under feedback laws, it could potentially be utilized for control synthesis, which is a topic for future research.\vspace{-1em}

\bibliographystyle{ieeeTRAN}
\bibliography{scc, sccmaster,DCLDMDbibliography,DCLDMDTemp,temp}

\end{document}